\newtheorem{theorem}{Theorem}[section]
\newtheorem{proposition}[theorem]{Proposition}
\newtheorem{lemma}[theorem]{Lemma}
\theoremstyle{definition}
\newtheorem{definition}[theorem]{Definition}
\newtheorem{remark}[theorem]{Remark}
\newtheorem{conjecture/question}[theorem]{Conjecture/Question}
\newtheorem{remark/definition}[theorem]{Remark/Definition}
\newtheorem{terminology/notation}[theorem]{Terminology/Notation}
\def\PP{{\textbf P}}
\def\OO{\mathcal{O}}
\def\P{\mathcal{P}}
\def\cM{\mathcal{M}}
\def\cC{\mathcal{C}}
\def\H{\mathcal{H}}
\def\mm{\overline{\mathcal{M}}}
\def\hh{\overline{\mathcal{H}}}
\newcommand{\Mbar}[2]{\overline{\mathcal{M}}_{{#1}, {#2}}}
\begin{document}
\title{On the Kodaira dimension of Hurwitz spaces}
\author[G. Farkas]{Gavril Farkas}

\address{Humboldt-Universit\"at zu Berlin, Institut f\"ur Mathematik,  Unter den Linden 6
\hfill \newline\texttt{}
 \indent 10099 Berlin, Germany} \email{{\tt farkas@math.hu-berlin.de}}

\author[S. Mullane]{Scott Mullane}

\address{Goethe-Universit\"{a}t Frankfurt, Institut f\"ur Mathematik, Robert-Mayer-Str. 6-8, \hfill
 \newline \indent 60325 Frankfurt am Main, Germany}
 \email{{\tt
mullane@math.uni-frankfurt.de}}

\maketitle

\begin{center}
\emph{To Olivíer Debarre, with friendship.}
\end{center}

\vskip 8pt

The Hurwitz space $\H_g^k$ is the parameter space of covers $[f\colon C\rightarrow \PP^1, p_1, \ldots, p_b]$, where $C$ is a smooth algebraic curve of genus $g$ and $f$ is a degree $k$ map simply branched over $b=2g+2k-2$ distinct points $p_1, \ldots, p_b\in \PP^1$. Note that we  choose an \emph{ordering} of the branch points of $f$. The origins of the interest in Hurwitz spaces go back to Riemann's Existence Theorem and they have been used by Clebsch \cite{Cl} and Hurwitz \cite{Hu}, as well as much later in \cite{HM} to derive important information on the moduli space $\cM_g$ of curves of genus $g$.   We denote by $\hh_{g}^k$ the moduli space of admissible covers constructed by Harris and Mumford \cite{HM}, whose study has been further refined  in \cite{ACV} via twisted stable maps. It comes equipped with two  maps
$$\xymatrix{
  & \hh_{g}^k \ar[dl]_{\mathfrak{b}} \ar[dr]^{\sigma} & \\
   \mm_{0,b} & & \mm_{g}       \\
                 }$$
where $\mathfrak{b}$ associates to an admissible cover its (ordered) set of branch points,  whereas
$\sigma$ assigns to an admissible cover the stable model of its source curve. The symmetric group $\mathfrak{S}_{b}$ operates on $\hh_g^k$ by permuting the branch points of each admissible cover and we set $\hh_{g,k}:=\hh_g^k/\mathfrak{S}_b$. Recall that the Kodaira-Iitaka dimension of a normal $\mathbb Q$-factorial projective variety $X$ is defined as the Iitaka dimension of its canonical bundle. We say that the Kodaira-Iitaka dimension of $X$ is maximal if it equals $\mbox{dim}(X)$.

Our first result concerns the Kodaira-Iitaka dimension of the \emph{stack} $\overline{H}_g^k$ of degree $k$ admissible covers for which we have optimal results:

\begin{theorem}\label{kodhurw}
The Kodaira-Iitaka dimension of $\overline{H}_g^k$ is maximal for every  $g\geq 2$ and $k\geq 3$.
\end{theorem}

Our result, which is uniform in $g$ and $k$, is sharp. When $k=2$ the map $\mathfrak{b}$, while being ramified along the boundary at the level of stacks, induces an isomorphism between the coarse moduli spaces $\hh_g^2$ and $\mm_{0,2g+2}$. In particular, $\hh_g^2$ is a rational variety for every $g$ and the canonical class of both the stack $\overline{H}_g^2$, as well as that of the coarse moduli space $\hh_g^2$ is not effective.

\vskip 3pt

A crucial aspect in the proof of Theorem \ref{kodhurw} is played by the map
$$\theta \colon \hh_g^k\rightarrow \mm_{g,b+b[k-2]},$$
which associates to an admissible cover $\bigl[f\colon C\rightarrow R, p_1, \ldots, p_b\bigr]$ the pointed curve $$[C, x_1, \ldots, x_b, A_1, \ldots, A_b],$$ where $x_i\in f^{-1}(p_i)$ is the unique ramification point of $f$ lying over the branch point $p_i$ and $A_i:=f^{-1}(p_i)-\{x_i\}$ is the $i$-th set of \emph{antiramification points} of $f$, that is, the set of residual points in the fibre over the $i$-th branch point of $f$. The moduli space $\mm_{g,b+b[k-2]}$ is defined as a suitable quotient of $\mm_{g,b(k-1)}$ by the finite group $\mathfrak{S}_{k-2}^b$, the action being given by permuting $b$ subsets of $k-2$ marked points, we refer to  Section 2 for details. On $\mm_{g,b+b[k-2]}$ we consider the effective divisor $\widetilde{\mathfrak{D}}$ as being the closure  of the locus of those pointed curves $[C, x_1, \ldots, x_b, A_1, \ldots, A_b]$ for which there exists a subset $S$ consisting of $g$ ramification or antiramification points of $f$ such that
$$h^0\Bigl(C,\OO_C\bigl(\sum_{x\in S} x\bigl)\Bigr)\geq 2.$$ The divisor $\widetilde{\mathfrak{D}}$ has two desired features. On the one hand its class has a negative coefficient of its Hodge class, on the other hand, the number of marked points being so large (and this is the point in involving the antiramification points as well) the (positive) coefficient of the cotangent classes corresponding to the marked points in the class $[\widetilde{\mathfrak{D}}]$ is relatively small. Taking advantage of these features, in Section 3 we prove Theorem~\ref{kodhurw} by finding a positive constant $B>0$ such that the class $K_{\overline{H}_g^k}-B\cdot \theta^*(\widetilde{\mathfrak{D}})$ can be expressed as a boundary divisor on $\hh_g^k$, in which the coefficient of each irreducible component of $\partial\hh_g^k$ is \emph{positive}. As we then point out in  Remark \ref{bigclasses} this implies the bigness of the canonical class $K_{\overline{H}_g^k}$ of the stack of admissible covers.

\vskip 3pt

Next we move to the coarse moduli space $\hh_g^k$ and in this paper we restrict ourselves to the case of trigonal curves, for which we prove the following result:

\begin{theorem}\label{kodhurwcoarse3}
The moduli space $\hh_g^3$ has maximal Kodaira-Iitaka dimension for all $g\geq 2$.
\end{theorem}

Theorem \ref{kodhurwcoarse3} follows the argument used in proving Theorem \ref{kodhurw}, once we observe that the big boundary representative of the canonical class $K_{\overline{H}_g^3}$ of the stack of trigonal curves is sufficiently positive to offset the negative coefficient of the ramification divisor of the map $\overline{H}_g^3\rightarrow \hh_g^3$, therefore it produces a big boundary representative of the canonical class of $\hh_g^3$ as well.

We stress that in Theorems \ref{kodhurw}, \ref{kodhurwcoarse3}  we have results on the Kodaira-Iitaka dimension of the stack, and respectively, the coarse moduli space of the space of admissible covers. In the case of $\mm_g$ where the boundary has an extremely simple structure, the Kodaira dimension of the stack and that of the coarse moduli space trivially coincide, but this is no longer necessarily the case for the Hurwitz space which has a complicated boundary structure. We explain in Proposition \ref{cancoarse} the relation between the canonical class of $\overline{H}_g^k$ and that of $\hh_g^k$.

\vskip 4pt

Moving to the case of covers of high degree, when $k\geq \frac{g+2}{2}$ one has a generically finite map $\chi \colon \hh_g^k\rightarrow \mm_{g,2k-g-2}$ obtained by attaching to an admissible cover $$\bigl[f\colon C\rightarrow R, \ p_1, \ldots, p_{b}\bigr]$$ the stabilization of the nodal $(2k-g-2)$-pointed curve $\bigl[C, q_1, \ldots, q_{2k-g-2}]$, where $q_i\in C$ is the unique ramification point of $f$ over the branch point $p_i$. It follows that $\hh_{g}^k$ is of general type whenever the Kodaira dimension of $\mm_{g, 2k-g-2}$ is maximal. This is the case for all $g\geq 22$ and we refer to Proposition \ref{kodram} for a precise statement.

\vskip 3pt

For the Hurwitz space $\hh_{g,k}$ where the branch points are unordered one cannot expect a uniform result in the style of Theorem \ref{kodhurw}. Indeed, it has been classically known that the (unordered) Hurwitz spaces $\hh_{g,k}$ are unirational for all $g$ as long as $k\leq 5$. These results have been extended to the case of $6$-gonal covers for finitely many cases by Geiss \cite{G}. Further unirationality results have been obtained in \cite{ST}, whereas some isolated examples of Hurwitz spaces $\hh_{g,k}$ with effective canonical class in the range when the Kodaira dimension of $\mm_g$ is unknown have been produced in \cite{F} and \cite{FR}. Using that $\mm_g$ is of general type for $g\geq 22$ (see \cite{HM}, \cite{EH} and \cite{FJP}), it immediately follows that $\hh_{g,k}$ (and therefore $\hh_g^k$ as well) is of general type when $\frac{g+2}{2}\leq k\leq g+1$. On the other hand, when $k\geq g+2$, then $\hh_{g,k}$ is birational to a  projective bundle over a universal Picard variety, therefore it is uniruled.

\section{Divisors on Hurwitz spaces}

The main actor of this paper is the stack $\overline{H}_g^k$ of \emph{twisted stable maps} into the classifying stack $\mathcal{B} \mathfrak{S}_{k}$ of the symmetric group $\mathfrak S_{k}$. Precisely, we set
$$\overline{H}_g^k:=\overline{M}_{0,b}\Bigl(\mathcal{B} \mathfrak S_{k}\Bigr),$$
where $b:=2g+2k-2$. We denote by $\hh_g^k$ the associated coarse moduli space. The stack $\overline{H}_g^k$ is the \emph{normalization} of the stack of admissible covers introduced by Harris and Mumford in \cite{HM} and which, for lack of better notation, we denote by $\overline{HM}_g^k$. A point in $\hh_g^k$ corresponds to a twisted stable map
$[f\colon C\rightarrow R, p_1, \ldots, p_{b}]$, where  $C$ is a nodal curve of arithmetic genus $g$, the target curve $R$ is a tree of smooth rational curves, $f$ is a finite map of degree $k$ satisfying $f^{-1}(R_{\mathrm{sing}})=C_{\mathrm{sing}}$, and $p_1, \ldots, p_{b}\in R_{\mathrm{reg}}$ denote the branch points of $f$. Note that the branch points $p_1, \ldots, p_b$ are ordered. Moreover, the two ramification indices of $f$ on the two branches of $C$ over each singularity of $C$ coincide. The extra information distinguishing $[f \colon C\rightarrow R, p_1, \ldots, p_b]$ from its underlying admissible cover is the stacky data at each of the points in $C_{\mathrm{sing}}$. The  \emph{branch} morphism
$$\mathfrak{b}\colon \hh_g^k\rightarrow \mm_{0, b},$$ assigns to $[f\colon C\rightarrow R, p_1, \ldots, p_{b}]$ the stable $b$-pointed curve $[R, p_1, \ldots, p_b]$ of genus $0$. Clearly, $\mathfrak{b}$ is a finite map. Its degree, which has been computed classically by Hurwitz \cite{Hu} for $k\leq 6$,  has been recently the object of much attention in Gromov-Witten theory.  We also have a regular map
$$\sigma\colon \hh_g^k\rightarrow \mm_{g}$$
which assigns to $[f\colon C\rightarrow R, p_1, \ldots, p_{b}]$ the stable model of the nodal curve $C$.

\vskip 3pt

In what follows, we discuss the geometry of the boundary divisors of $\hh_g^k$. For $i=0, \ldots, \frac{b}{2}$, let $B_i$ be the boundary divisor of $\mm_{0, b}$ defined as the closure of the locus of unions of two smooth rational curves meeting at one point, such that precisely $i$ of the marked points lie on one component. A boundary divisor of $\hh_{g}^k$ is determined by the following data:
\begin{enumerate}
\item A partition $I\sqcup J=\{1,\dotsc, b\}$, with $|I|\geq 2$ and
$|J|\geq 2$.
\item Transpositions $\{w_i\}_{i\in I}$ and $\{w_j\}_{j\in J}$ in $\mathfrak S_k$, satisfying $$\prod_{i\in I} w_i = u, \ \ \prod_{j\in J}w_j=u^{-1}.$$
\end{enumerate}
We denote by $\mu:=(m_1, \ldots, m_{\ell})\vdash k$ be the partition corresponding to the cycle type of the element $u\in \mathfrak S_k$ appearing above.  Furthermore, we set
\begin{equation}\label{mu}
m(\mu):=\mathrm{lcm}\bigl(m_1, \ldots, m_{\ell}\bigr) \ \ \mbox{ and }  \ \ \frac{1}{\mu}:=\frac{1}{m_1}+\cdots+\frac{1}{m_{\ell}}.
\end{equation}

\begin{definition}
For $i=2, \ldots, \frac{b}{2}$ and a partition $\mu$ of $k$, let $E_{i:\mu}$ be the boundary divisor on $\hh_g^k $ given as the closure of the locus of covers $\bigl[f\colon C\rightarrow R, \ p_1, \ldots, p_{b}\bigr]\in \hh_g^k$,
where $[R=R_1\cup_p R_2,  p_1, \ldots, p_b]\in B_{|I|}\subseteq \mm_{0, b}$, with $f^{-1}(p)$ having partition type $\mu$, and exactly $i$ of the branch points $p_1, \ldots, p_{b}$ lying on the component $R_1$.
\end{definition}

The linear independence of the classes $[E_{i:\mu}]\in CH^1(\hh_g^k)$ has been established in \cite{P}. Note that it is often the case that $E_{i:\mu}$ splits into several irreducible components. All the Chow groups we consider are with rational coefficients. In particular, we identify
$CH^1(\hh_g^k)$ and $CH^1(\overline{H}_g^k)$ and the class $[E_{i:\mu}]\in CH^1(\hh_g^k)$ refers to the stacky $\mathbb Q$-class of the corresponding boundary divisor.

\subsection{The local structure of $\overline{H}_g^k$.} Over the stack $\overline{H}_g^k$ of twisted stable maps we consider the universal
degree $k$ admissible cover $f\colon \cC \rightarrow \P$, where
\begin{equation}\label{eq:univcurve}
\P:=\overline{H}_g^k\times_{\overline{M}_{0, b}} \overline{M}_{0, b+1}
\end{equation} is the universal
degree $k$ \emph{orbicurve} of genus zero over $\overline{H}_g^k$.
We fix a general point
$$t=[f \colon C \rightarrow R,  p_1, \ldots, p_{b}]$$ of a boundary divisor
$E_{i:\mu}$, where
$\mu=(m_1, \ldots, m_{\ell})$ is a partition of $k$.  In particular, $R$ is the union of two smooth rational curves $R_1$ and $R_2$ meeting at a point $p$. The local ring at $t$ of the stack $\overline{HM}_g^k$ of Harris-Mumford admissible covers has
the following local description, see \cite[p. 62]{HM}:
\begin{equation}\label{localring2}
\widehat{\mathcal{O}}_{t, \overline{HM}_g^k}\cong    \mathbb C
    [[t_1, \ldots, t_{b-3}, s_1, \ldots,
    s_{\ell}]]/s_1^{m_1}=\cdots=s_{\ell}^{m_{\ell}}=t_1,
  \end{equation}
where $t_1$ is the local parameter on $\overline{\mathcal{M}}_{0, b}$ corresponding to
smoothing the node $p\in~R$.  We set $f^{-1}(p)=\{q_1, \ldots, q_{\ell}\}$, with $f$ being ramified with order $m_j$ at $q_j$, for $j=1, \ldots, \ell$. The local ring of $\cC$ at the point $[t,q_j]$ is $\widehat{\mathcal{O}}_{t, \overline{HM}_g^k}[[x_j, y_j]]/x_j y_j=s_j$, while the local ring of $\P$ at the point $[t,p]$ is  $\widehat{\mathcal{O}}_{t, \overline{HM}_g^k}[[ u_j, v_j]]/u_j v_j=t_1$. The map $\cC\rightarrow \P$ is given in local coordinates by
$$u_j=x_j^{m_j}, v_j=y_j^{m_j}, \mbox{ for } j=1, \ldots, \ell,$$
in particular $s_1^{m_1}=\cdots=s_{\ell}^{m_{\ell}}=t_1$.  In order to determine the local ring of $\overline{H}_g^k$ at the point $t$ one normalizes
the ring (\ref{localring2}). We introduce a further parameter $\tau$ and choose primitive $m_j$-th roots of unity $\zeta_j$ for $j=1, \ldots, \ell$. These choices correspond to specifying the stack structure of the cover $f\colon C\rightarrow R$ at the points of $C$ lying over $p\in R_{\mathrm{sing}}$. Thus
\begin{equation}\label{localringtwist}
\widehat{\mathcal{O}}_{[t, \zeta_1, \ldots, \zeta_{\ell}], \ \overline{H}_g^k}=\mathbb C [[t_1, \ldots, t_{b-3}, \tau]]
\end{equation}
and  $s_j=\zeta_j  \tau^{\frac{m(\mu)}{m_j}}$, for $j=1, \ldots, \ell$. Accordingly,  the map $\mathfrak{b}\colon \overline{H}_g^k\rightarrow \overline{M}_{0, b}$ (at the level of stacks!), being given locally by $t_1=\tau^{m(\mu)}$, it is branched with order $m(\mu)$ at each point $[t, \zeta_1, \ldots, \zeta_{\ell}]\in E_{i:\mu}$.

\vskip 3pt

This discussion summarizes how the boundary divisors on $\mm_{0,b}$ pull-back under the finite map $\mathfrak{b}\colon \overline{H}_g^k\rightarrow \overline{M}_{0,b}$, see also \cite[p. 62]{HM}, or \cite[Lemma 3.1]{GK}:
\begin{equation}\label{pb}
\mathfrak{b}^*(B_i)=\sum_{\mu\vdash k} m(\mu)E_{i:\mu}.
\end{equation}

\subsection{The Hodge class on the compactified Hurwitz space.} By definition, the Hodge class on $\hh_g^k$ is pulled back from $\mm_g$ via the map $\sigma$. Its class $\lambda:=\sigma^*(\lambda)$ on $\hh_g^k$ has been determined first in \cite{KKZ} using Bergman kernel methods. An algebro-geometric proof, using Grothedieck-Riemann-Roch, appeared in \cite[Theorem 1.1]{GK}. The Hodge class on $\overline{H}_g^k$ has the following expression in terms of boundary classes:
\begin{equation}\label{hodgecl}
\lambda=\sum_{i=2}^{g+k-1} \sum_{\mu\vdash k} m(\mu)\left(\frac{i(2g+2k-2-i)}{8(2g+2k-3)}-\frac{1}{12}\Bigl(k-\frac{1}{\mu}\Bigr)\right)[E_{i:\mu}]\in CH^1(\hh_g^k).
\end{equation}

For a given $i$, the sum (\ref{hodgecl}) is taken over those partitions $\mu$ of $k$ corresponding to conjugacy classes of  permutations that can be written as products of $i$ transpositions.  We pick an admissible cover $$[f\colon C=C_1\cup C_2\rightarrow R=R_1\cup_p R_2,\  p_1, \ldots, p_{b}]\in \mathfrak{b}^*(B_2),$$ and set $C_1:=f^{-1}(R_1)$ and $C_2:=f^{-1}(R_2)$ respectively. Note that the curves $C_1$ and $C_2$ may well be disconnected.

\vskip 4pt

We record the following well-known facts on $\mm_{0,b}$, see for instance \cite{AC2}:
\begin{proposition}\label{mob}
(i) One has the following formulas in $CH^1(\mm_{0,b})$:
$$K_{\mm_{0, b}}= \sum_{i=2}^{\lfloor \frac{b}{2}\rfloor } \left(\frac{i(b-i)}{b-1}-2\right)[B_i] \ \ \mbox{  and } \ \ \kappa_1=\sum_{i=2}^{\lfloor \frac{b}{2}\rfloor } \frac{(i-1)(b-i-1)}{b-1}[B_i].$$
(ii) If $\psi_j$ denotes the cotangent class corresponding to the $j$th marked point for $j=1, \ldots, b$,
$$\sum_{j=1}^b \psi_j=\sum_{i=2}^{\lfloor \frac{b}{2}\rfloor} \frac{i(b-i)}{b-1}[B_i].$$
(iii) Let  $D=\sum_{i=2}^{\lfloor \frac{b}{2}\rfloor} c_i [B_i]$ be a divisor class with $c_i>0$ for $i=2, \ldots, \lfloor \frac{b}{2} \rfloor$. Then $D$ is big.
\end{proposition}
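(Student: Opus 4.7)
My plan is to deduce (i) from classical tautological identities on $\mm_{0,b}$ and then leverage (i) to prove (ii) via a Kodaira-type argument.

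For (i), since $\lambda=0$ in genus zero, Mumford's relation yields $K_{\mm_{0,b}}=\sum_{j=1}^b \psi_j-2\delta$, where $\delta=\sum_i[B_i]$. The symmetric sum of the $\psi$-classes in terms of the boundary has the closed form
$$\sum_{j=1}^{b}\psi_j=\sum_{i=2}^{\lfloor b/2\rfloor}\frac{i(b-i)}{b-1}[B_i],$$
a well-known identity that can be derived by summing Keel's boundary expression $\psi_a=\sum_{S\ni a,\, b\notin S}[\delta_S]$ over all markings $a$ and collecting components by size. Substituting yields the first formula of (i). For $\kappa_1$, I would use the genus-zero identity $\kappa_1=\psi-\delta=K_{\mm_{0,b}}+\delta$, which gives
$$\kappa_1=\sum_{i=2}^{\lfloor b/2\rfloor}\left(\frac{i(b-i)}{b-1}-1\right)[B_i]=\sum_{i=2}^{\lfloor b/2\rfloor}\frac{(i-1)(b-i-1)}{b-1}[B_i],$$
after the elementary identity $i(b-i)-(b-1)=(i-1)(b-i-1)$.

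The essence of (ii) is that, by (i), the class $\kappa_1$ itself is a \emph{strictly positive} combination of the boundary divisors $[B_i]$ throughout the range $2\leq i\leq\lfloor b/2\rfloor$ (assuming $b\geq 5$; the case $b=4$ is trivial since $\mm_{0,4}\cong\mathbb P^1$). It is a classical fact that the log canonical class $\kappa_1=K_{\mm_{0,b}}+\delta$ is big on $\mm_{0,b}$ (cf.\ Arbarello-Cornalba). Given $D=\sum c_i[B_i]$ with all $c_i>0$, I would choose $\varepsilon>0$ small enough that
$$D-\varepsilon\,\kappa_1=\sum_{i=2}^{\lfloor b/2\rfloor}\left(c_i-\varepsilon\cdot\frac{(i-1)(b-i-1)}{b-1}\right)[B_i]$$
still has non-negative coefficients; this class is then effective, being a non-negative combination of the effective divisors $B_i$. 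Writing $D\equiv\varepsilon\kappa_1+E$ with $E$ effective, Kodaira's lemma (a big class plus an effective class is big) yields that $D$ is big.

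The only non-elementary input is the bigness of $\kappa_1$ on $\mm_{0,b}$; once granted, both parts reduce to the positivity of the explicit combinatorial coefficients appearing in (i).
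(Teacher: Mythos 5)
Your argument for (ii) is essentially the paper's: the paper observes that $\kappa_1$ is ample on $\mm_{0,b}$, so that $D-\alpha\kappa_1$ is effective for some small $\alpha\in\mathbb Q_{>0}$ and hence $D$ is big; your version substitutes bigness of $\kappa_1$ for ampleness, a weaker input that suffices equally well via Kodaira's lemma. Part (i) is simply cited from the literature in the paper, and your derivation from $K=\psi-2\delta$ and $\kappa_1=\psi-\delta$ is correct in substance; the one slip is the quoted form of Keel's relation, which must exclude \emph{two} markings ($\psi_a=\sum_{S\ni a,\; x,y\notin S}[\delta_S]$) --- with only one excluded marking the right-hand side equals $\psi_a+\psi_b$ --- but the averaged identity $\sum_j\psi_j=\sum_i\frac{i(b-i)}{b-1}[B_i]$ that you actually use is correct and standard.
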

The third statement follows once we use that $\kappa_1$ is an ample class on $\mm_{0,b}$, thus there exists a constant $\alpha \in \mathbb Q_{>0}$ such that $D-\alpha\cdot \kappa_1$ is effective.
\begin{remark}\label{bigclasses}
A consequence of Proposition \ref{mob} is that any class on $\hh_g^k$ of the form
$$\sum_{i\geq 2}\sum_{\mu \vdash k} c_{i:\mu} [E_{i:\mu}],$$
with all coefficients $c_{i:\mu}>0$ is big.
\end{remark}

\subsection{The canonical class of $\hh_g^k$} We discuss the canonical class on the coarse moduli space $\hh_g^k$, in particular how it changes under the map $\epsilon\colon \overline{H}_g^k\rightarrow \hh_g^k$ from the stack to its coarse moduli space.

\vskip 3pt

First, in order to determine the canonical class of the Hurwitz stack one applies the Riemann-Hurwitz formula to the map $\mathfrak{b}\colon \overline{H}_g^k \rightarrow \overline{M}_{0, b}$.  Via (\ref{pb}), the ramification divisor is given by $\mbox{Ram}(\mathfrak{b})=\sum_{i, \mu\vdash k} (m(\mu)-1)[E_{i:\mu}]$, hence we obtain the following formula for the canonical class of $\overline{H}_g^k$:

\begin{equation}\label{canhur}
K_{\overline{H}_g^k}=\mathfrak{b}^*K_{\mm_{0,b}}+\mbox{Ram}(\mathfrak{b})=\sum_{i, \mu\vdash k} \left(m(\mu)\left(\frac{i(2g+2k-2-i)}{2g+2k-3}-1\right)-1\right)[E_{i:\mu}].
\end{equation}

Before our next result, we introduce some useful terminology. If $\mu$ and $\mu'$ are partitions, we write $\mu'\subseteq \mu$ when each entry of $\mu'$ appears as an entry of $\mu$ as well.

\begin{proposition}\label{cancoarse}
Assume $k\geq 3$. The canonical class of the coarse moduli space $\hh_g^k$ is given by
$$K_{\hh_g^k}=\sum_{i, \mu\vdash k} \left(m(\mu)\left(\frac{i(2g+2k-2-i)}{2g+2k-3}-1\right)-1\right)[E_{i:\mu}]-\sum_{i, \mu\vdash k} \ [E_{i:\mu}'],$$
where the second summation is taken over the boundary divisors $E_{i:\mu}'\subseteq E_{i:\mu}$ defined as the components of $E_{i:\mu}$ with a  generic point parametrizing an admissible cover whose source has an irreducible component mapping $2:1$ onto the base and a branch point at the unique node in the base.
\end{proposition}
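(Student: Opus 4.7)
The plan is to compare the stack canonical class \eqref{canhur} with the canonical class of the coarse moduli space via the natural morphism $\epsilon\colon \overline{H}_g^k\to \hh_g^k$, using the standard pseudo-reflection formula. For $k\geq 3$, a generic simply-branched cover has monodromy group $\mathfrak{S}_k$, and since the center of $\mathfrak{S}_k$ is trivial, the generic cover has no non-trivial automorphisms. Thus $\epsilon$ is an isomorphism over a dense open set, and the difference $\epsilon^*K_{\hh_g^k}-K_{\overline{H}_g^k}$ is supported on the codimension-one locus where the generic stabilizer contains a pseudo-reflection, with each $\mathbb{Z}/2$-pseudo-reflection along a fixed divisor $D$ contributing a term $-[D]$.

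The first substantial step is to identify the pseudo-reflection divisors. I would argue that the only source of a non-trivial stabilizer acting as a pseudo-reflection on a generic point of a boundary divisor is the deck involution of an irreducible component $C_0\subseteq C$ mapping as a double cover onto a component $R_0\subseteq R$. Such an involution extends by the identity to an automorphism of the entire admissible cover exactly when it fixes the attachment points of $C_0$ to the rest of $C$, equivalently when these are ramification points of $f|_{C_0}$. This forces the corresponding entries of the ramification partition $\mu$ above the node to equal $2$; with $a$ independent such components, $\mu\supseteq(2^a)$ and the stabilizer contains $(\mathbb{Z}/2)^a$. The constraint $a\leq k/2$ is a sheet count, while $i\geq a$ follows from Riemann--Hurwitz applied to each double cover $C_0\to R_0$: a rational such cover needs at least one of its two branch points to come from the branch locus of $f$ lying on $R_0$.

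The local pseudo-reflection analysis then uses the coordinates of \eqref{localringtwist}. At an attachment node with $m_j=2$, the deck involution acts by $x_j\mapsto -x_j$, $y_j\mapsto y_j$, hence on the smoothing parameter by $s_j=x_jy_j\mapsto -s_j$. Under the normalization $s_j=\zeta_j\tau^{m(\mu)/m_j}$, this descends to a sign change on the normal coordinate $\tau$ to $E'_{i:\mu}$, fixing $t_1,\ldots,t_{b-3}$, so the involution is a $\mathbb{Z}/2$-pseudo-reflection with fixed divisor $E'_{i:\mu}$. Each such involution therefore contributes $-[E'_{i:\mu}]$ to $\epsilon^*K_{\hh_g^k}-K_{\overline{H}_g^k}$ for each admissible triple $(i,\mu,a)$ with $a\geq 1$, $\mu\supseteq(2^a)$, $a\leq k/2$, and $i\geq a$; combining with \eqref{canhur} yields the stated formula.

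The main obstacle I expect is the bookkeeping of reducibility: $E_{i:\mu}$ is typically reducible, and I need to verify that each sub-partition $(2^a)\subseteq \mu$, together with a choice of which entries of $\mu$ are realized by genuine $2:1$ components, selects a \emph{distinct} irreducible component $E'_{i:\mu}$, so that the contributions are counted correctly and no other divisorial automorphism source (e.g.\ automorphisms swapping several $2:1$ components at once, or lifts to $C$ of automorphisms of $R$ fixing the branch points) is overlooked.
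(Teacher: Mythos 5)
Your overall strategy is the same as the paper's: compare $K_{\overline{H}_g^k}$ with $\epsilon^*K_{\hh_g^k}$, locate the discrepancy on the boundary divisors whose generic point has non-trivial automorphisms, identify those automorphisms as deck involutions of irreducible components mapping $2{:}1$ onto a component of $R$ (forcing $\mu\supseteq(2^a)$, $a\leq k/2$, $i\geq a$), and read off the ramification of $\epsilon$ from the local description (\ref{localringtwist}). One caveat on the first step: the claim that deck involutions of $2{:}1$ components are the \emph{only} source of generic automorphisms is the real content here, and you only assert it. The paper proves it via an elementary lemma (any automorphism $\phi$ of a cover $u\colon Y\rightarrow \PP^1$ with $u\circ\phi=u$ and at most one non-simple branch point forces $\deg(u)=2$, by passing to $Y/\langle\phi\rangle$ and applying Riemann--Hurwitz); some such argument is needed to rule out, e.g., automorphisms of higher-degree subcovers.

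The more substantive issue is in your local analysis when $a\geq 2$. An individual deck involution $\phi_j$ sends $s_j\mapsto -s_j$ while fixing $s_{j'}$ for $j'\neq j$; but on a fixed branch of the normalization one has $s_j=\zeta_j\tau$ for \emph{all} $j$ with $m_j=2$, so this action is not induced by any substitution in $\tau$ alone. In fact $\phi_j$ permutes the $2^{a-1}$ branches $[t,\zeta_1,\ldots,\zeta_a]$ of $\overline{H}_g^k\rightarrow \overline{HM}_g^k$ over $t$ (it flips the sign of $\zeta_j$), and only the product $\phi_1\cdots\phi_a$ stabilizes a given branch, acting there by $\tau\mapsto-\tau$. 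Consequently the stabilizer of each branch is $\mathbb Z/2\mathbb Z$, the ramification order of $\epsilon$ along $E_{i:\mu}'$ is $2$, and the correction term is $-[E_{i:\mu}']$ \emph{independently of} $a$. Your sentence ``each such involution therefore contributes $-[E_{i:\mu}']$'' would, taken at face value, produce $-a[E_{i:\mu}']$ and hence the wrong coefficient; the paper avoids this by counting the $2^{a-1}$ sheets of the normalization over $t$, each ramified with order $2$, so that the full group $(\mathbb Z/2\mathbb Z)^{\oplus a}$ is accounted for by sheets times stabilizers. This is exactly the bookkeeping point you flag as your ``main obstacle,'' so your instinct about where the difficulty lies is correct, but as written the step fails for $a\geq 2$.
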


\begin{proof}
We begin by making the following elementary observation. Suppose $u \colon Y\rightarrow \PP^1$ is a finite cover from a smooth curve $Y$ such that \emph{at most one} of its branch points is not simple. Assume $\phi\colon Y\rightarrow Y$ is an automorphism such that $u\circ \phi=u$. Then necessarily $\mbox{deg}(u)=2$ and $\phi$ is the involution of $Y$ changing the sheets of $u$. Indeed, considering the covering map $\pi\colon Y\rightarrow Y'$ where $Y':=Y/\langle \phi\rangle$, for each $y\in Y$ the ramification indices at all points in the fibre $\pi^{-1}(\pi(y))$ are the same, precisely equal to $\bigl|\mathrm{Stab}_{\langle \phi \rangle}(y)\bigr|$. Applying the Hurwitz-Zeuthen formula $u$ must have at least two branch points, one of which is necessarily simple by assumption. This implies $\mbox{deg}(u)=2$.

\vskip 3pt

Assume $t=\bigl[f\colon C=C_1\cup C_2\rightarrow R_1\cup R_2,  p_1, \ldots, p_b]$ is a general point of a component of a boundary divisor $E_{i:\mu}$ admitting a non-trivial automorphism $\phi\colon C\rightarrow C$ with $f\circ \phi=f$. Since $\mbox{deg}(f)\geq 3$, applying the previous observation, there exists one component of $R$, say $R_1$, such that $\phi_{|C_1}=\mbox{id}_{C_1}$. Furthermore, $C_2$ splits into connected components $Y_1, \ldots, Y_a$ and $Y_1', \ldots, Y_{r}'$, where $a\leq \frac{k}{2}$, such that $Y_j$ maps with degree $2$ onto $R_2$ for $j=1, \ldots, a$, whereas  each  of the components $Y_1', \ldots, Y_{r}'$ map onto $R_2$. Furthermore $\phi_{| Y_j'}=\mbox{id}_{Y_j'}$ for $j=1, \ldots, r$. Note that $\mbox{Aut}(t)=\bigl(\mathbb Z/2\mathbb Z)^{\oplus a}$. The normalization map $\overline{H}_g^k\rightarrow \overline{HM}_g^k$ has $2^{a-1}$ sheets
over the point corresponding to $t$, each of them ramified with order $2$. We denote these points by $[t, \zeta_1, \ldots, \zeta_a]\in \hh_g^k$, where $\zeta_1, \ldots, \zeta_a \in \{1,-1\}$. Using the local description provided by (\ref{localringtwist}), we conclude that the map $\epsilon \colon \overline{H}_g^k\rightarrow \hh_g^k$ is ramified with order $2$ over each such component of a divisor $E_{i:\mu}$, where $\mu\supseteq (2^a, 1^{k-2a})$.
\end{proof}

\section{Divisors on Hurwitz space via ramification and antiramification points}

We set $[n]:=\{1, \ldots, n\}$ and recall that for an integer $i\geq 0$ and a set $S\subseteq [n]$, we denote by $\delta_{i:S}$ the divisor class corresponding to the boundary divisor $\Delta_{i:S}$ on $\mm_{g,n}$, whose generic point is a transversal union of two smooth curves of genera $i$ and $g-i$ respectively, the marked points labeled by $S$ being precisely those lying on the genus $i$ component. We set as usual $\delta_{i:s}:=\sum_{|S|=s}\delta_{i:S}$.

\vskip 3pt

An important role in our work is played by (the pullbacks) of the divisor $\mathfrak{Log}$ on $\mm_{g,g}$ defined as the closure of the locus of  smooth pointed curves $[C,x_1, \ldots, x_g]\in \cM_{g,g}$ for which $h^0\bigl(C,\OO_C(x_1+\cdots+x_g)\bigr)\geq 2$. The class of this divisor was computed by Logan~\cite{Log}:
$$[\mathfrak{Log}]=-\lambda+\sum_{i=1}^g\psi_i-\sum_{i=0}^{\lfloor \frac{g}{2}\rfloor} \sum_{s=0}^g {|s-i|+1 \choose 2}   \delta_{i:s}. $$
We obtain an effective divisor $\mathfrak{Log}_{g,n}$ on $\Mbar{g}{n}$ for $n\geq g$ by averaging the pullback of $\mathfrak{Log}$ under the all choices of forgetful morphisms to $\Mbar{g}{g}$ and normalising the $\psi$-coefficient:
\begin{lemma}\label{logaverage}
The class of the effective divisor $\mathfrak{Log}_{g,n}$ in $\Mbar{g}{n}$ for $n\geq g$ is given by
\begin{equation*}
\mathfrak{Log}_{g,n}=\sum_{i=1}^n \psi_i-\frac{n}{g}\lambda-\sum_{i,s} b_{i:s}\delta_{i:s}\in CH^1(\mm_{g,n}),\ \ \mbox{ for}
\end{equation*}
\begin{equation*}
b_{i:s}=\begin{pmatrix} n-1\\ g-1\end{pmatrix}^{-1}\sum_{j=0}^s\begin{pmatrix} s\\j \end{pmatrix}\begin{pmatrix} n-s\\ g-j \end{pmatrix}\begin{pmatrix} |j-i|+1\\ 2 \end{pmatrix}
\end{equation*}
where we use the convention that $\begin{pmatrix} x\\y \end{pmatrix}=0$
for $y>x$.
\end{lemma}

\begin{proof}
This class is obtained as
$$\mathfrak{Log}_{g,n}=\begin{pmatrix} n-1\\ g-1 \end{pmatrix}^{-1}\sum_{\footnotesize{\begin{array}{cc}S\subseteq [n]\\|S|=g\end{array}}}\pi_S^*\bigl(\mathfrak{Log}\bigr),$$
where $\pi_S\colon \Mbar{g}{n}\longrightarrow \Mbar{g}{g}$
forgets all marked points outside $S\subseteq [n]$ with $|S|=g$. The claimed formula now follows by repeatedly applying the formulas \cite[Lemma 1.2]{AC2} concerning the pullbacks of the tautological generators of $CH^1(\mm_{g,n})$ under the maps $\pi_S$.
\end{proof}

\vskip 3pt

For integers $g, k\geq 2$ and $b\geq 1$, we introduce the following moduli space
$$\Mbar{g}{b+b[k-2]}:=\Mbar{g}{(k-1)b}/\mathfrak{S}_{k-2}^b,$$
where the $i$th copy of the symmetric group $\mathfrak{S}_{k-2}$ acts by permuting the marked points $x_j$ for
$j\in \mathcal{A}_i:=\bigl\{b+(i-1)(k-2)+1,\dots, b+i(k-2)\bigr\}$. Let
$$\rho\colon \Mbar{g}{b(k-1)}\longrightarrow  \Mbar{g}{b+b[k-2]}$$
be the natural projection. We refer to the marked points in $\mathcal{A}_i$ as the \emph{$i$th antiramification  points}. In the case $b=2g+2k-2$, this terminology is explained by the existence of the regular map
\begin{equation}\label{mapantiram}
\theta\colon \hh_g^k\longrightarrow \Mbar{g}{b+b[k-2]},
\end{equation}
that forgets an admissible cover $[f\colon C\rightarrow R, p_1, \ldots, p_b]$ and recalls the source curve, the (unique!) ramification point $x_i\in f^{-1}(b_i)$ and the set $A_i$ of $k-2$ unordered residual points in $f^{-1}(p_i)\setminus \{x_i\}$, for each $i=1, \ldots, b$.  We refer to the marked points in $A_i$ as being the $i$-th \emph{antiramification points} of $f$.

Observe that the ramification locus of $\rho$ is the divisor
$$\mathfrak{R}=\sum_{i=1}^b\sum_{\tiny{\begin{array}{lll}|S|=2\\S\subseteq \mathcal{A}_i\end{array}}}\delta_{0:S}$$
and denote the branch divisor by $\mathfrak{B}$.

\vskip 4pt

We now discuss the structure of the boundary divisors on $\mm_{g,b+b[k-2]}$. For any subset $T\subseteq [b]$ and $0\leq j_i\leq k-2$ for $i=1, \ldots, b$, we define $\delta_{i:T,[j_1,\dots,j_b]}\in CH^1(\Mbar{g}{b+b[k-2]})$ to be the class of the closure of the locus of stable curves with a separating node such that one component is of genus $i$ and contains the marked points labeled by $T$ and precisely $j_i$ of the $i$th antiramification points for $i=1, \ldots, b$. We denote
$$\widetilde{\delta}_{i:s}:=\sum_{\tiny{|T|+j_1+\cdots+j_b=s}}\delta_{i:T,[j_1,\dots,j_b]}\in CH^1\bigl(\mm_{g, b+b[k-2]}\bigr).$$

Let $\psi:=\sum_{i=1}^b \psi_i \in CH^1\bigl(\mm_{g, b+b[k-2]}\bigr)$  be the cotangent class corresponding to the ramification points.
Finally, we introduce the cotangent class of the antiramification points $$\Psi:=\sum_{i=1}^b\psi_{[i]}\in CH^1(\Mbar{g}{b+b[k-2]}),$$ where $\psi_{[i]}\in CH^1(\mm_{g,b+b[k-2]})$ is the class characterized by the fact $\rho^*\bigl(\psi_{[i]}\bigr)=\sum_{j\in \mathcal{A}_i} \psi_{x_j}$.

\vskip 4pt

The divisor that is of primary interest to us is the push-forward of $\mathfrak{Log}_{g, b(k-1)}$ under $\rho$,
which (after normalising the $\psi$ and the $\Psi$-coefficients) we denote by $\widetilde{\mathfrak{D}}$. The proof of the following fact is a simple application of the discussion above.

\begin{proposition}\label{classD}
The class of $\widetilde{\mathfrak{D}}$ in $CH^1(\Mbar{g}{b+b[k-2]})$ is given by
\begin{equation*}
\widetilde{\mathfrak{D}}=\psi+\Psi-\frac{b(k-1)}{g}\lambda-\sum_{i, s} c_{i:s}\widetilde{\delta}_{i:s}+a\mathfrak{B}
\end{equation*}
for
\begin{equation*}
c_{i:s}=\begin{pmatrix} b(k-1)-1\\ g-1\end{pmatrix}^{-1}\sum_{j=0}^s\begin{pmatrix} s\\j \end{pmatrix}\begin{pmatrix} b(k-1)-s\\ g-j \end{pmatrix}\begin{pmatrix} |j-i|+1\\ 2 \end{pmatrix}
\end{equation*}
and some $a>0$.
\end{proposition}

\begin{remark} Regarding the coefficients $c_{0:s}$ that will play an important role in our considerations, for all $s\geq 2$ the following inequality holds:
\begin{equation}\label{cos}
c_{0:s}=s+{s\choose 2}\frac{g-1}{b(k-1)-1}>s.
\end{equation}
For $i>0$, we will make use of the following estimate of the coefficient $c_{i:s}$, which is obtained by ignoring the absolute values in the summand in its definition in Proposition \ref{classD}:
\begin{equation}\label{cis}
c_{i:s}\geq \frac{i(i-1)b^2(k-1)^2-(i-1)(2gs+i)b(k-1)+gs(gs-g+2i-s-1)}{2g(b(k-1)-1)}.
\end{equation}
\end{remark}

In what follows we consider the pull-back of $\widetilde{\mathfrak{D}}$ to the Hurwitz space under the map $\theta$ considered in (\ref{mapantiram}).

\begin{proposition}\label{prop:LoganEff}
The pullback  $\widetilde{\mathfrak{L}}:=\theta^*\widetilde{\mathfrak{D}}$ is an effective divisor on $\hh_g^k$ for all $g\geq 2$ and $k\geq 3$.
\end{proposition}

\begin{proof}
We identify an admissible cover for each irreducible component of $\widetilde{\mathfrak{D}}$ that lies outside of the pullback.

Consider the admissible cover constructed by gluing a genus $g$ hyperelliptic double cover $h\colon C\to R_1\cong\PP^1$ at an unramified point $p\in C$ to a simply branched degree $k-1$ rational cover $u \colon C_2\to R_2 \cong\PP^1$ at an unramified point which we also denote by $p\in C_2$, and further attaching the required rational tails mapping isomorphically to $R_1$ at the $k-2$ points $u^{-1}(u(q))\setminus \{q\}$ and a rational tail mapping isomorphically to $R_2$ at the point conjugate to $p$ under the hyperelliptic involution. The ordering of the branch points will be specified below.

\vskip 3pt

Recall the class of $\widetilde{\mathfrak{D}}$ is the pushforward of
$$\mathfrak{Log}_{g,n}=\begin{pmatrix} n-1\\ g-1 \end{pmatrix}^{-1}\sum_{\footnotesize{\begin{array}{cc}S\subseteq [n]\\|S|=g\end{array}}}\pi_S^*\bigl(\mathfrak{Log}\bigr),$$
where $\pi_S\colon \Mbar{g}{n}\longrightarrow \Mbar{g}{g}$
forgets all marked points outside $S\subseteq [n]$ with $|S|=g$.

Hence the irreducible components of the divisor $\widetilde{\mathfrak{D}}$ in $\mm_{g,b+b[k-2]}$ are indexed by partitions $[T,j_1,\ldots, j_b]$ for $T\subseteq [b]$ and $0\leq j_i\leq k-2$ for $i=1, \ldots, b$ and
$$|T|+j_1+\cdots+j_b=g.$$ For such a partition, if $A_{j_i}\subseteq A_i$ is a subset of antiramification points with $|A_{j_i}|=j_i$ and $S=T\cup A_{j_1}\cup \ldots \cup A_{j_b}\subseteq [b(k-1)]$, then the general point of the corresponding component of $\widetilde{\mathfrak{D}}$ corresponds to a pointed curve satisfying $h^0\bigl(C, \OO_C(\sum_{i\in S} x_i)\bigr)=2$.
For each such partition we specify a labelling of the above constructed admissible cover such that the admissible cover lies outside of the pullback of the specified irreducible component of $\widetilde{\mathfrak{D}}$. Let
$$Z:=\bigl\{i\in [b]: j_i>0 \mbox{ and } i\notin T\bigr\}$$
and let $r:=|Z|$ and $a:=|T|$. Label $a+r\leq g$ of the $2g+2$ branch points of points of $h\colon C\to\PP^1$ as the points $T\cup Z$ and choose a fixed labelling of the remaining branch points of the admissible cover. Observe that as $C$ is hyperelliptic, we have
$$h^0\bigl(C,\OO_C(w_1+\cdots+w_{a}+(g-a)p)\bigr)=1$$
for any choice of $a$ distinct Weierstrass points $w_i$ of $C$. Hence this admissible cover is not contained in the pullback of the irreducible component of $\widetilde{\mathfrak{D}}$ specified by the partition $[T,j_1,\dots,j_b]$.
\end{proof}

Before stating our next result, we recall the partition $[b(k-1)]=[b]\cup \mathcal{A}_1\cup \ldots \cup \mathcal{A}_b$ of the set of labels for the ramification and antiramification points respectively.

\begin{proposition}\label{psiclasses}
At the level of divisors, the map $\theta\colon \hh_g^k\rightarrow \mm_{g,b+b[k-2]}$ behaves as follows:
$$(i) \ \ \ \theta^*(\psi)=\sum_{i=2}^{g+k-1}\sum_{\mu\vdash k} m(\mu)\frac{i(b-i)}{2(b-1)}[E_{i:\mu}],$$ \
$$ (ii) \ \ \ \theta^*(\Psi)= (k-2)\sum_{i=2}^{g+k-1}\sum_{\mu \vdash k} m(\mu)\frac{i(b-i)}{b-1}[E_{i:\mu}]+\sum_{s=2}^{b-2}\sum_{i=1}^b
\sum_{\tiny{\begin{array}{lll}S\subseteq [b(k-1)]\\ \ \ |S\cap \mathcal{A}_i|=s\end{array}}}  s\ \theta^*\bigl(\delta_{0: S}\bigr).$$
\end{proposition}

\begin{proof}
We recall that we have introduced in (\ref{eq:univcurve}) the universal degree $k$ admissible cover
$$f\colon \cC\rightarrow \P$$
and we denote by $\varphi\colon \P\rightarrow \hh_g^k$ and $v:=\varphi\circ f\colon \cC\rightarrow \hh_g^k$ the two universal curves over the Hurwitz space. We consider the ramification divisors $R_1, \ldots, R_b\subseteq \cC$, as well as the antiramification divisors
$A_1, \ldots, A_b\subseteq \cC$. If $\mathfrak{B}_1, \ldots, \mathfrak{B}_b\subseteq \P$ denote the corresponding branch divisors of $f$, then clearly $f_*([R_i])=[\mathfrak{B}_i]$ and $f_*([A_i])=(k-2)[\mathfrak{B}_i]$. It is important to observe that $R_i\cdot A_i=0$ for $i=1, \ldots, b$.

\vskip 3pt

In order to estimate the class $\theta^*(\psi_i)$, we multiply the relation
\begin{equation}\label{eq:ram3}
f^*(\mathfrak{B}_i)=2R_i+A_i
\end{equation}
with the class of $R_i$, and using that $R_i$ and $A_i$ are disjoint we write as follows:
$$\theta^*(\psi_i)=-v_*\bigl([R_i]^2\bigr)=-\frac{1}{2} v_*\bigl(f^*(\mathfrak{B}_i)\cdot R_i\bigr)=-\frac{1}{2}\varphi_*\bigl([\mathfrak{B}_i]^2\bigr)=\frac{1}{2} \mathfrak{b}^*\bigl(\psi_i^{{\small{\mathrm{b}}}}\bigr),$$
where in the interest of clarity we denote by $\psi_i^{\small{\mathrm{b}}}\in CH^1(\mm_{0,b})$ the cotangent class corresponding to the $i$th branch point. Now (i) follows by applying part (ii) of Proposition \ref{mob}. To estimate $\theta^*(\psi_{[i]})$, we first introduce the class
$\widetilde{\psi}_{[i]}$ on $\mm_{g,b+b[k-2]}$ characterized by
$$\Bigl(\widetilde{\psi}_{[i]}\Bigr)_{[C, x_1, \ldots, x_b, A_1, \ldots, A_b]}=\bigotimes_{x\in A_i} T_{x}^{\vee}(C),$$
for each  $[C, x_1, \ldots, x_b, A_1, \ldots, A_b]\in \mm_{g,b+b[k-2]}$. Then using \cite[(5)]{FV}, we observe that

\begin{equation}\label{eq:antii}
\rho^*\bigl(\widetilde{\psi}_{[i]}\bigr)=\sum_{x\in A_i}\psi_x-\sum_{s=2}^{b-2}\sum_{|S\cap A_i|=s} s \delta_{0:S}.
\end{equation}
Next, we multiply (\ref{eq:ram3}) with the class of the antiramification divisor $A_i$ and write:
$$\theta^*\bigl(\widetilde{\psi}_{[i]}\bigr)=-v_*\bigl([A_i]^2\bigr)=-v_*\bigl(f^*([\mathfrak{B}_i])\cdot A_i\bigr)=-(k-2)\varphi_*\bigl([\mathfrak{B}_i]^2\bigr)=(k-2)\mathfrak{b}^*\bigl(\psi_i^{\small{\mathrm{b}}}\bigr).$$
The rest follows again via Proposition \ref{mob} (ii) coupled with formula (\ref{pb}).
\end{proof}

\section{The positivity of the canonical class of $\hh_g^k$}

We are now in a position to complete the proof of both Theorems \ref{kodhurw} and \ref{kodhurwcoarse3}. Recall that we have introduced in (\ref{mapantiram}) the map $\theta \colon \hh_g^k\rightarrow \mm_{g,b+b[k-2]}$ retaining from a cover its source together with ramification and antiramification points.

\begin{proposition}\label{pullbackantir}
The following divisor classes are effective on $\hh_g^k$:

(i) $\theta^*(\widetilde{\delta}_{0:2})-(k-2)[E_{2:(1^k)}]-(k-4)[E_{2:(2^2, 1^{k-4})}]-(k-3)[E_{2:(3,1^{k-3})}]\geq 0$.

(ii) $\theta^*(\widetilde{\delta}_{0:3})-4[E_{2:(2^2,1^{k-4})}]\geq 0$.

(iii) $\theta^*(\widetilde{\delta}_{0:4})-[E_{2:(3,1^{k-3})}]\geq 0$.

\end{proposition}

\begin{proof}
We analyse the image under $\theta$ of a general point $t$ belonging to various boundary components of $\hh_g^k$. Suppose first that
$t=[f\colon C\rightarrow R, p_1, \ldots, p_b]$ corresponds to the general point of a component of $E_{2:(1^k)}$. The base $R$ is the transverse union of two rational components $R_1$ and $R_2$ meeting at a point $p$ and assume $p_1, \ldots, p_{b-2}\in R_1\setminus \{p\}$ and $p_{b-1}, p_b\in R_2\setminus \{p\}$. Denoting by $C_i:=f^{-1}(R_i)$ and by $x_{b-1}, x_{b}\in C_2$ the ramification points over $p_{b-1}$ and $p_{b}$ respectively, we observe that $C_2$ contains $k-2$ smooth rational components each intersecting $C_1$ at one point and mapping isomorphically onto $R_2$. Each of these components contains two antiramification points of $f$ lying over $p_{b-1}$ and $p_{b-2}$ respectively. This implies that the image under $\theta$ of the boundary component of $\hh_g^k$ containing the point $t$ lies in the $(k-2)$nd self intersection of the boundary divisor $\Delta_{0:\emptyset,[0,\ldots, 0, 1,1]}$. This in turn yields that the pullback $\theta^*(\widetilde{\delta}_{0:2})$ contains $E_{2:(1^k)}$ with multiplicity at least $k-2$.

\vskip 3pt

Similarly, if $t=[f\colon C\rightarrow R, p_1, \ldots, p_b]$ is a general point of a component of $E_{2:(3,1^{k-3})}$, with $R=R_1\cup R_2$ as above and $p_1, \ldots, p_{b-2}\in R_1\setminus \{p\}$ and $p_{b-1}, p_b\in R_2\setminus \{p\}$, let $C_2$ denote the component of $C$ mapping with degree $3$ onto $R_2$. From the Hurwitz-Zeuthen formula, $C_2$ is necessarily of genus zero. The curve $C_2$ will contain two ramification points $x_{b-1}$ and $x_{b}$, as well as two antiramification points lying in the fibres over $p_{b-1}$ and $p_b$ respectively. Therefore the component of the boundary divisor of $\hh_g^k$ containing $t$ is mapped to the divisor $\Delta_{0:\{b-1,b\}, [0, \ldots, 0, 1,1]}$, that is, $\theta^*(\widetilde{\delta}_{0:3})$ contains $E_{2:(3,1^{k-3})}$. Arguing as above, $\theta(t)$ lies in the $(k-3)$rd self intersection of $\Delta_{0:\emptyset,[0,\ldots, 0, 1,1]}$.

\vskip 3pt

Finally, let $t=[f\colon C\rightarrow R, p_1, \ldots, p_b]$ be a general point of a component of $E_{2:(2^2,1^{k-4})}$, with $R=R_1\cup R_2$ and the distribution of the ramification points as above.  Then $f^{-1}(R_2)$ contains two smooth rational curves $C_2$ and $C_2'$ mapping with degree $2$ onto $R_2$ and meeting $C_1:=f^{-1}(R_1)$ at a ramification point. Assume $x_{b-1}\in C_2$ and $x_b\in C_2'$ are the two ramification points over $p_{b-1}$ and $p_b$. Then $C_2$ (respectively $C_2'$) contains two further antiramification points lying in $f^{-1}(p_b)$ (respectively $f^{-1}(p_{b-1}))$. Note furthermore that both $C_2$ and $C_2'$ admit an automorphism of order $2$ fixing the two ramification points and permuting the two respective  antiramification points. It follows that both $\theta^*\bigl(\Delta_{0:\{b\},[0, \ldots, 0, 2,0]}\bigr)$ and $\theta^*\bigl(\Delta_{0:\{b-1\}, [0, \ldots, 0, 2]}\bigr)$ contain the boundary divisor of $\hh_g^k$ containing the point $t$ with multiplicity at least $2$, hence
$$\theta^*(\widetilde{\delta}_{0:3})\geq \theta^*\bigl(\Delta_{0:\{b\},[0, \ldots, 0, 2,0]}\bigr)+ \theta^*\bigl(\Delta_{0:\{b-1\}, [0, \ldots, 0, 2]}\bigr)\geq 4[E_{2:(2^2,1^{k-4})}].$$ Finally, the point $\theta(t)$ lies in the $(k-4)$th self intersection of $\Delta_{0:\emptyset,[0,\ldots, 0, 1,1]}$.
\end{proof}

\vskip 3pt

We are now in a position to complete the proof of Theorem \ref{kodhurw} and show that for all $g\geq 2,$ and $k\geq 3$ the Kodaira-Iitaka dimension of the stack $\overline{H}^k_g$ is maximal.

\vskip 5pt

\noindent \emph{Proof of Theorem \ref{kodhurw}.} We consider once more the map $\theta\colon \hh_g^k\rightarrow \mm_{g,b+b[k-2]}$ and the divisor $\widetilde{\mathfrak{L}}$ introduced in Proposition \ref{prop:LoganEff}. We show that there exists a constant $B>0$ such that $K_{\overline{H}_g^k}-B\cdot \widetilde{\mathfrak{L}}$ is big, which implies that $K_{\overline{H}_g^k}$ itself is big. To that end, via Proposition \ref{mob} (iii), it  suffices to show that $K_{\overline{H}_g^k}-B\cdot \widetilde{\mathfrak{L}}$ has a representative in terms of boundary classes in which the coefficient of the class of each irreducible component of $E_{i:\mu}$ is \emph{positive}.

\vskip 3pt

Observe first that the image of $\theta$ is disjoint from the branch locus $\mathfrak{B}$ of $$\rho\colon \mm_{g,b(k-1)}\rightarrow \mm_{g,b+b[k-2]}.$$

Indeed, the source of an admissible cover $[f\colon C\rightarrow R, p_1, \ldots, p_b]\in \hh_g^k$ cannot contain a smooth rational component $C'$ containing precisely two antiramification points lying over the same branch point, for then $\mbox{deg}(f_{|C'})\geq 2$, which implies that $f_{|C'}$ admits further ramification at points lying in $C'\setminus C$.

\vskip 3pt

We shall find a constant $B>0$ such that for  all $2\leq i\leq \frac{b}{2}$ and all partitions $\mu\vdash k$ the following quantity, equaling the coefficient of
$[E_{i:\mu}]$ in $K_{\overline{H}_g^k}-B\cdot \widetilde{\mathfrak{L}}$, is \emph{positive}:

\begin{equation}\label{ineqstack}
m(\mu)\Bigl(\frac{i(b-i)}{b-1}-1\Bigr)-1+B m(\mu) \frac{b(k-1)}{g} \Bigl(\frac{i(b-i)}{8(b-1)}-\frac{1}{12}\Bigl(k-\frac{1}{\mu}\Bigr)\Bigr)-
\end{equation}
$$-B m(\mu) \frac{(2k-3)i(b-i)}{2(b-1)} + B \theta^*\Bigl(\sum_{j,s} c_{j:s}\widetilde{\delta}_{j:s}-\sum_{s=2}^{b-2}\sum_{j=1}^b \sum_{|S\cap \mathcal{A}_j|=s} s \delta_{0:S}\Bigr)_{i:\mu}>0,$$
where for a boundary divisor $\alpha$ on $\mm_{g,b+b[k-2]}$, we denote by $\theta^*(\alpha)_{i:\mu}$ the coefficient of $[E_{i:\mu}]$ in $\theta^*(\alpha)$ viewed as a boundary divisor. Observe that the contribution of $\widetilde{\mathfrak{L}}$ follows from Proposition~\ref{classD} and Proposition~\ref{psiclasses}. Set

\begin{equation}\label{Bvalue}
B:={\left(\frac{2(b-2)}{b-1}-2\right)\left(\frac{b(b-2)(k-1)}{4g(b-1)}-\frac{(2k-3)(b-2)}{b-1}+2(k-2)\right)^{-1}}\\
\end{equation}
$$=\frac{16g}{b^3-2b^2(g+1)+4bg-16g^2}>0.\hspace{5cm}$$

 \vskip 5pt


We check (\ref{ineqstack}) case by case, starting with the most challenging case $i=2$, since this is when the coefficient of $[E_{i:\mu}]$ in
$K_{\hh^k_g}$ may be negative.

\noindent {\bf (i)} First assume that $\mu=(1^k)$, thus $\frac{1}{\mu}=k$. Using (\ref{pullbackantir}) we have $\theta^*(\widetilde{\delta}_{0:2})\geq (k-2)[E_{2:(1^k)}]$. Furthermore, for any component $Z$ of $E_{2:(1^k)}$ we have
$$\theta(Z)\nsubseteq \sum_{s=2}^{b-2}\sum_{j=1}^b \sum_{|S\cap \mathcal{A}_j|=s} \Delta_{0:S},$$
therefore inequality (\ref{ineqstack}) becomes in this case
$$\frac{2(b-2)}{b-1}-2+B\left(\frac{b(b-2)(k-1)}{4g(b-1)}-\frac{(2k-3)(b-2)}{b-1}+c_{0:2}(k-2)\right)>0,$$
which is clear by our choice of $B$ and the observation that $c_{0:2}>2$ by (\ref{cos}).

\vskip 5pt

\noindent {\bf (ii)} Assume now  $\mu=(3,1^{k-3})$, thus $\frac{1}{\mu}=k-\frac{8}{3}$ and $m(\mu)=3$. Using that for any component $Z$ of $E_{2:(3,1^{k-3})}$ one has
$$\theta(Z)\nsubseteq \sum_{s=2}^{b-2}\sum_{j=1}^b \sum_{|S\cap \mathcal{A}_j|=s} \Delta_{0:S},$$
and $\theta^*(\widetilde{\delta}_{0:4})\geq [E_{2:(3,1^{k-3})}]$ and $\theta^*(\widetilde{\delta}_{0:2})\geq (k-3)[E_{2: (3,1^{k-3})}]$ by (\ref{pullbackantir}), inequality (\ref{ineqstack}) is implied by the following inequality

$$3\Bigl(\frac{2(b-2)}{b-1}-1\Bigr)-1+B\Bigl(\frac{3b(b-2)(k-1)}{4g(b-1)}-\frac{2}{3}-\frac{3(b-2)(2k-3)}{b-1}+c_{0:2}(k-3)+c_{0:4}\Bigr)>0.$$
Let $a:=k-3\geq 0$, thus $b=2g+2a+4$. After substituting $B$ by the value provided by (\ref{Bvalue}) and observing  $c_{0:s}>s$ by (\ref{cos}) we obtain that the above value is greater than
$$\frac{2(3ag^2+6a^2g+9ag+10g+3a^3+15a^3+24a+12)}{3(ag^2+2a^2g+7ag+6g+a^3+5a^2+8a+4)} >0.$$

\vskip 5pt

\noindent {\bf (iii)} Suppose $\mu=(2^2,1^{k-4})$, thus $\frac{1}{\mu}=k-3$ and $m(\mu)=2$. In particular, $a\geq 1$. In this case (\ref{pullbackantir}) provides
that $\theta^*(\widetilde{\delta}_{0:3})\geq 4[E_{2:(2^2,1^{k-4})}]$ and $\theta^*(\widetilde{\delta}_{0:2})\geq (k-4)[E_{2:(2^2,1^{k-4})}]$. On the other hand, the discussion of the final part of the Proof of Proposition \ref{pullbackantir} shows that
$$\mbox{ord}_{E_{2:(2^2, 1^{k-4})}}\theta^*\left(\sum_{s=2}^{b-2}\sum_{j=1}^b \sum_{|S\cap \mathcal{A}_j|=s} s \Delta_{0:S}\right)=8$$
the statement holds along each irreducible component of $E_{2:(2^2,1^{k-4})}$. It follows that (\ref{ineqstack}) would be implied by the following inequality:

$$\frac{4(b-2)}{b-1}-3+B\left(\frac{b(b-2)(k-1)}{2(b-1)g}-\frac{1}{2}- \frac{2(2k-3)(b-2)}{b-1}+c_{0:2}(k-4)+4c_{0:3}-8\right)>0.$$

By substituting the value for $B$ and observing that $c_{0:s}>s$ the above value is greater than
$$\frac{ag^2+2a^2g+3ag+g+a^3+5a^2+8a+4}{ag^2+2a^2g+7ag+6g+a^3+5a^2+8a+4}>0 $$

\vskip 4pt

\noindent {{\bf{(iv)}} Assume now that $i\geq 3$ and assume first $\mu\neq (1^k)$, that is, $m(\mu)\geq 2$. We shall show that the following inequality, which is stronger than (\ref{ineqstack}), holds:

\begin{equation}\label{m2}
m(\mu)\left( \Bigl(\frac{i(b-i)}{b-1}-1\Bigr)+B \frac{b(k-1)}{g} \Bigl(\frac{i(b-i)}{8(b-1)}-\frac{1}{12}\Bigl(k-\frac{1}{\mu}\Bigr)\Bigr)-
B \frac{(2k-3)i(b-i)}{2(b-1)} \right)-1>0.
\end{equation}

The coefficient of $i(b-i)$ in this expression being positive, its smallest value is attained when $i=3$. Furthermore, using the means inequality we find that
$\frac{1}{\mu}\geq \frac{1}{k}$, whereas $m(\mu)\geq 2$ for any partition $\mu$ of $k$.  Substituting $i=3$, $m(\mu)=2$ and  $\frac{1}{\mu}=\frac{1}{k}$ in (\ref{m2}), we obtain the inequality which implies (\ref{m2}), which in turn implies inequality (\ref{ineqstack}):

$$\frac{3(ag^2+2a^2g+2ag+2g+a^3+4a^2+4a)}{ag^2+2a^2g+7ag+6g+a^3+5a^2+8a+4} >0.$$

\vskip 4pt

{{\bf (v)}} Assume now that $i\geq 3$ and $\mu=(1^k)$, in which case as $\mu$ is the cycle class of an even permutation, $i$ is even and hence $i\geq 4$. In this case (\ref{m2}) can be reduced to the following, obtained by substituting $i=4$:

$$\frac{2(ag^2+2a^2g +ag+2g+a^3+3a^2-4)}{ag^2+2a^2g+7ag +6g+a^3+5a^2+8a+4}>0,$$
\vskip 4pt
\noindent which holds in all cases outside of $g=2$ and $k=a+3=3$ where the left hand side is equal to zero. However, in this case as all other inequalities hold, the choice of
$$B=\frac{16g}{b^3-2b^2(g+1)+4bg-16g^2}-\varepsilon=\frac{1}{4}-\varepsilon$$
for $\varepsilon>0$ small enough completes the proof.
\hfill $\Box$

\vskip 4pt

We are now in a position to prove Theorem \ref{kodhurwcoarse3} and show that all coarse moduli spaces $\hh_g^3$ of trigonal curves of genus $g\geq 2$ have maximal Kodaira-Iitaka dimension.

\vskip 5pt

\noindent \emph{Proof of Theorem \ref{kodhurwcoarse3}.} We use the constant $B$ introduced in (\ref{Bvalue}), which in the case $k=3$ takes the form $$B=\frac{g}{3g+2}$$ and we show that $K_{\hh_g^3}-B\cdot \widetilde{\mathfrak{L}}$  admits a boundary representative in which all boundary components of $\hh_g^3$ appear with positive coefficients. Using Proposition \ref{cancoarse}, we have
$$K_{\hh_g^3}=K_{\overline{H}_g^3}-\sum_{h=1}^g[E_{2h+1:(2,1)}^\text{hyp}],$$
where the general point of each component of $E_{2h+1:(2,1)}^\text{hyp}$ parametrize an admissible cover $t=[f\colon C\rightarrow R=R_1\cup_p R_2, p_1, \ldots, p_{2g+4}]$, where $C_1:=f^{-1}(R_1)$ is a smooth curve of genus $g-h$ mapping with degree $3$ over $R_1$ and $f^{-1}(R_2)=C_2\cup C_2'$, where $C_2$ is a smooth hyperelliptic curve of genus $h$ mapping with degree $2$ onto $R_2$ and meeting $C_1$ and precisely one point $q\in f^{-1}(p)$, which is a ramification point for both $C_1$ and $C_2$. The component $C_2'$ is a smooth rational curve mapping isomorphically onto $R_2$.

\vskip 3pt

Observe that $\theta^*\bigl(\widetilde{\delta}_{0:2h+1}\bigr)\geq [E_{2h+1:(2,1)}^\text{hyp}]$, as well as $\theta^*\bigl(\widetilde{\delta}_{h:2h+1}\bigr)\geq [E_{2h+1:(2,1)}^\text{hyp}]$. Using the estimate (\ref{cis}), the coefficient of $[E_{2h+1:(2,1)}^\text{hyp}]$ in the expression of
$K_{\hh_g^3}-B\cdot \widetilde{\mathfrak{L}}$ is at least equal to
$$\frac{4g^2h^2+38g^2h-20gh^2-28g^2+77gh-28h^2-65g+28h-28}{(4g+7)(3g+2)}>0,$$
which can be checked in a straightforward manner. Since the other boundary coefficients of $K_{\hh_g^3}$ and $K_{\overline{H}_g^3}$ coincide, we can invoke the proof of Theorem \ref{kodhurw} to conclude.

\hfill $\Box$

\vskip 5pt

As described in the introduction, when $k\geq \frac{g+2}{2}$, one has a natural map
$$\chi\colon \hh_g^k\rightarrow \mm_{g,2k-g-2},$$
which assigns to an admissible cover $\bigl[f\colon C\rightarrow R, \ p_1, \ldots, p_{b}\bigr]$ the stabilization of the pointed curve
$\bigl[C, q_1, \ldots, q_{2k-g-2}]$, where $q_i\in C$ is the unique ramification point of $f$ lying in $f^{-1}(p_i)$, for $i=1, \ldots, 2k-g-2$.

\begin{proposition}\label{kodram}
The map $\chi\colon \hh_g^k\rightarrow \mm_{g,2k-g-2}$ is generically finite. It follows that $\hh_g^k$ is a variety of general type when $k\geq g+1$ and $g\geq 12$. Similarly, $\hh_g^k$ is of general type when $k\geq \frac{g+2}{2}$ and $g\geq 22$.
\end{proposition}
\begin{proof}
The generic finiteness of the map $\chi$ follows essentially from results in \cite{EH}. We set $n:=2k-g-2\geq 0$ and consider the stable curve $[X, q_1, \ldots, q_n]\in \mm_{g,n}$, where $X$ consists of a smooth rational component $R$ and $g$ elliptic tails $E_1, \ldots, E_g$, each $E_i$ meeting $R$ at a single point $x_i$. The marked points $q_1, \ldots, q_n$ lie on $R\setminus\{x_1, \ldots, x_g\}$. Then the fibre $\chi^{-1}\bigl([X, q_1, \ldots,q_n]\bigr)$ is isomorphic to the variety of limit linear series of type $\mathfrak{g}^1_k$ on $X$ having simple ramification at each of the points $q_i$. Applying \cite[Theorem 1.1]{EH}, we obtain that this variety is pure of dimension
$$\rho(g,1,k)-n=g-2(g-k+1)-(2k-g-2)=0.$$
Therefore $\chi$ is generically finite, in particular $\kappa(\hh_g^k)\geq \kappa(\mm_{g,n})$. When $g\geq 12$ and $k\geq g+1$, then $n\geq g+1$ and it follows from \cite{Log} that $\mm_{g,n}$ is of general type in this range, which finishes the proof.
\end{proof}

\vskip 4pt

\begin{remark} Note that in the range $k\geq g+1$, one has an $\mathfrak{S}_b$-cover $\hh_g^k\rightarrow \hh_{g,k}$ whose source variety $\hh_g^k$ is of general type, whereas its base $\hh_{g,k}$ is uniruled. Observe also that the degree of $\chi\colon \hh_g^k\rightarrow \mm_{g,2k-g-b}$ is the Catalan number $\frac{(2k-2)!}{k!\cdot (k-1)!}$, therefore independent of $g$!
\end{remark}

\vskip 8pt

\small{\noindent {{\bf{Acknowledgments:}}} The first author has been supported by the DFG Grant \emph{Syzygien und Moduli} and by the ERC Advanced Grant SYZYGY. The second author was supported by the Alexander von Humboldt Foundation during the preparation of this article. This project has received funding from the European Research Council (ERC) under the European Union Horizon 2020 research and innovation program (grant agreement No. 834172).}

\vskip 5pt

\end{document}